\documentclass{article}

\pdfoutput=1

\usepackage{amsmath, amssymb, amsthm, enumerate, fullpage, bm}
\usepackage{verbatim, enumitem, bbm, etoolbox}
\usepackage{latexsym}

\apptocmd{\lim}{\limits}{}{}

\theoremstyle{definition}
\newtheorem{thm}{Theorem}
\newtheorem{theorem}[thm]{Theorem}

\newtheorem{lemma}[thm]{Lemma}

\numberwithin{subcase}{case}

\theoremstyle{definition}
\newtheorem{definition}[thm]{Definition}

\def\forkindep{\mathrel{\raise0.2ex\hbox{\ooalign{\hidewidth$\vert$\hidewidth\cr\raise-0.9ex\hbox{$\smile$}}}}}


\def\Ind{\setbox0=\hbox{$x$}\kern\wd0\hbox to 0pt{\hss$\mid$\hss}
	\lower.9\ht0\hbox to 0pt{\hss$\smile$\hss}\kern\wd0}

\def\Notind{\setbox0=\hbox{$x$}\kern\wd0\hbox to 0pt{\mathchardef
		\nn=12854\hss$\nn$\kern1.4\wd0\hss}\hbox to 0pt{\hss$\mid$\hss}\lower.9\ht0
	\hbox to 0pt{\hss$\smile$\hss}\kern\wd0}

\def\phi{\varphi}

\def\<{\langle}
\def\>{\rangle}

\begin{document}	

	\bibliographystyle{plain}
	
	\author{Douglas Ulrich
	\thanks{Partially supported
by Laskowski's NSF grant DMS-1308546.}\\
Department of Mathematics\\University of Maryland}
	\title{Keisler's Order is Not Linear, Assuming a Supercompact}
	\date{\today} 
	
	\maketitle
	
\begin{abstract}
We show that if there is a supercompact cardinal then Keisler's order is not linear.
\end{abstract}

Keisler's order is a partial order $\trianglelefteq$ defined on complete countable theories, introduced by Keisler in \cite{Keisler}.  It is defined by setting $T_1 \trianglelefteq T_2$ if and only if for all $\lambda$ and for all regular ultrafilters $\mathcal{U}$ on $\lambda$ and for all $M_i \models T_i$, if $M_2^\lambda/\mathcal{U}$ is $\lambda^+$-saturated then so is $M_1^\lambda/\mathcal{U}$. The regularity assumption shows that this does not depend on the choice of the models $M_i$, so we can say that $\mathcal{U}$ $\lambda^+$-saturates $T$ if $M^\lambda/\mathcal{U}$ is $\lambda^+$-saturated for any $M \models T$; then we can rephrase Keisler's order as saying $T_1 \trianglelefteq T_2$ if and only if for all $\lambda$ and for all regular ultrafilters $\mathcal{U}$ on $\lambda$, if $\mathcal{U}$ $\lambda^+$-saturates $T_2$ then $\mathcal{U}$ $\lambda^+$-saturates $T_1$.

In his original paper \cite{Keisler}, Keisler proved that there is a maximal class in Keisler's order; then in \cite{ShelahIso} Shelah obtained several results on Keisler's order, including determining the least two classes of Keisler's order (namely the theories without the finite cover property, and the stable theories with the finite cover property). After this progress was slow, until a recent spate of work by Malliaris and Shelah. Among other things in \cite{InfManyClass} they prove that Keisler's order has infinitely many classes, and in \cite{Optimals} they prove that if there is a supercompact cardinal then simplicity is a dividing line in Keisler's order, that is if $T$ is simple and $T' \trianglelefteq T$ then $T'$ is simple. 

Using the technology developed in these papers, we observe that under the existence of a supercompact cardinal, Keisler's order is not linear. \footnote{In private communications, I have learned that Malliaris and Shelah had independently obtained this result, but it was not disseminated.} More specifically, let $T_{n, k}$ be the theory of the generic $n$-clique free $k$-ary graph for any $n > k \geq 3$ (these are the theories used in \cite{InfManyClass} to get infinitely many classes), and let $T_{cas}$ be the simple non-low theory described by Casanovas in \cite{Casanovas}. Then we show that $T_{cas} \not \trianglelefteq T_{n, k}$ always, and if there is a supercompact cardinal then $T_{n,k} \not \trianglelefteq T_{cas}$. 

We first recall the general setup introduced in \cite{DividingLine}. Namely, let $\mathcal{B}$ be a complete Boolean algebra, let $T$ be a complete countable theory, let $\lambda$ be a cardinal and let $\mathcal{U}$ be an ultrafilter on $\mathcal{B}$. Then $\mathcal{U}$ is $(\lambda, \mathcal{B}, T)$-moral if whenever $(\mathbf{a}_s: s \in [\lambda]^{<\aleph_0})$ is a certain kind of distribution from $\mathcal{U}$ (namely, a $(\mathcal{B}, T, \overline{\phi})$-possibility for some sequence of formulas $\overline{\phi}$) then $(\mathbf{a}_s)$ has a multiplicative refinement in $\mathcal{U}$. This definition is actually a generalization of $\lambda^+$-saturation, that is: suppose $\mathcal{B} = \mathcal{P}(\lambda)$ and $\mathcal{U}$ is $\lambda$-regular. Then $\mathcal{U}$ $\lambda^+$-saturates $T$ if and only if $\mathcal{U}$ is $(\lambda, \mathcal{B}, T)$-moral.

Also, given $\mu \geq \theta$ with $\theta$ regular and $\mu = \mu^{<\theta}$, and given a set $X$, let $P_{X, \mu, \theta}$ be the set of all partial functions from $X$ to $\mu$ of cardinality less than $\theta$, ordered by reverse inclusion; let $\mathcal{B}_{X, \mu, \theta}$ be the Boolean algebra completion of $P_{X, \mu, \theta}$. So $\mathcal{B}_{X, \mu, \theta}$ has the $\mu^+$-c.c. and is $<\theta$-distributive. For each $f \in P_{X, \mu, \theta}$ let $\mathbf{x}_f$ be the corresponding element of $\mathcal{B}_{X, \mu, \theta}$.

The key fact about this setup is Theorem 6.13 from \cite{DividingLine}:

\begin{theorem}
Suppose $T_0$, $T_1$ are complete countable theories. Suppose there are $\lambda \geq \mu \geq \theta$ and an ultrafilter $\mathcal{U}$ on $\mathcal{B} := \mathcal{B}_{2^\lambda, \mu, \theta}$ such that $\mathcal{U}$ is $(\lambda, \mathcal{B}, T_1)$-moral but not $(\lambda, \mathcal{B}, T_0)$-moral. Then $T_0 \not \trianglelefteq T_1$.
\end{theorem}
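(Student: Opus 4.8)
The plan is to manufacture, out of the abstract datum $(\mathcal{B}, \mathcal{U})$, a genuine $\lambda$-regular ultrafilter $\mathcal{V}$ on $\mathcal{P}(\lambda)$ that $\lambda^+$-saturates $T_1$ but not $T_0$; by the definition of Keisler's order this immediately gives $T_0 \not\trianglelefteq T_1$. The bridge between the two settings is the special case recorded above: for a $\lambda$-regular ultrafilter on $\mathcal{P}(\lambda)$, $\lambda^+$-saturating $T$ is \emph{the same thing} as being $(\lambda, \mathcal{P}(\lambda), T)$-moral. So I would reduce everything to building a $\lambda$-regular $\mathcal{V}$ on $\mathcal{P}(\lambda)$ which is $(\lambda, \mathcal{P}(\lambda), T_1)$-moral but not $(\lambda, \mathcal{P}(\lambda), T_0)$-moral, and I would obtain such a $\mathcal{V}$ by transporting $\mathcal{U}$ back along a homomorphism onto $\mathcal{B}$.

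Concretely, I would first fix a regular filter $\mathcal{D}$ on $\lambda$ together with a surjective Boolean homomorphism $\mathbf{j}\colon \mathcal{P}(\lambda)/\mathcal{D} \to \mathcal{B}$, and set
\[
\mathcal{V} \;=\; \{\, A \subseteq \lambda : \mathbf{j}(A/\mathcal{D}) \in \mathcal{U} \,\}.
\]
Since $\mathbf{j}$ is a homomorphism into a Boolean algebra carrying the ultrafilter $\mathcal{U}$, the set $\mathcal{V}$ is an ultrafilter on $\mathcal{P}(\lambda)$; and since $\mathcal{D} \subseteq \mathcal{V}$ with $\mathcal{D}$ regular, $\mathcal{V}$ is $\lambda$-regular. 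The existence of such a pair $(\mathcal{D}, \mathbf{j})$ is exactly where the parameters of $\mathcal{B} = \mathcal{B}_{2^\lambda, \mu, \theta}$ enter: one produces $\mathbf{j}$ from an independent family of functions on $\lambda$ of size $2^\lambda$ (these exist by the standard construction of independent families of this size), matched against the fact that the generating poset $P_{2^\lambda, \mu, \theta}$, dense in $\mathcal{B}$, has size at most $2^\lambda$, while the $\mu^+$-c.c.\ and $<\theta$-distributivity of $\mathcal{B}$ bound its antichains and fix the degree of completeness the homomorphism must respect.

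The core of the argument is then the transfer of morality across $\mathbf{j}$, in both directions, for an arbitrary theory $T$ and sequence $\overline{\phi}$. As $\mathbf{j}$ preserves $\wedge$, $\leq$ and complements, it carries a $(\mathcal{P}(\lambda), T, \overline{\phi})$-possibility $(\mathbf{a}_s : s \in [\lambda]^{<\aleph_0})$ to a $(\mathcal{B}, T, \overline{\phi})$-possibility $(\mathbf{j}(\mathbf{a}_s))$, and it carries any multiplicative refinement lying in $\mathcal{V}$ to a multiplicative refinement lying in $\mathcal{U}$. For the moral direction I would take an arbitrary possibility in $\mathcal{V}$, push it forward to $\mathcal{U}$, invoke $(\lambda, \mathcal{B}, T_1)$-morality to produce a multiplicative refinement there, and then \emph{pull it back} to a multiplicative refinement in $\mathcal{V}$; this makes $\mathcal{V}$ $(\lambda, \mathcal{P}(\lambda), T_1)$-moral, hence $\lambda^+$-saturating for $T_1$. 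For the non-moral direction I would begin from a $(\mathcal{B}, T_0, \overline{\phi})$-possibility witnessing the failure of morality for $T_0$, lift it to a possibility in $\mathcal{V}$ by surjectivity, and note that any multiplicative refinement of the lift in $\mathcal{V}$ would push forward to one in $\mathcal{U}$, contradicting the choice of the possibility; thus $\mathcal{V}$ is not $(\lambda, \mathcal{P}(\lambda), T_0)$-moral and so fails to $\lambda^+$-saturate $T_0$.

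The main obstacle is the coherent \emph{pull-back} of multiplicative refinements: producing preimages $\mathbf{b}_s \leq \mathbf{a}_s$ with $\mathbf{b}_s \in \mathcal{V}$ that remain simultaneously multiplicative, i.e.\ satisfy $\mathbf{b}_{s \cup t} = \mathbf{b}_s \wedge \mathbf{b}_t$ for all $s, t \in [\lambda]^{<\aleph_0}$. Lifting a single element is trivial by surjectivity, but maintaining all of these identities at once is precisely what forces the delicate construction of $(\mathcal{D}, \mathbf{j})$: the homomorphism must be built with enough genericity, through the independent family, that the lifts can be chosen coherently, and it is the $<\theta$-distributivity of $\mathcal{B}$ that guarantees the $<\theta$-supported meets appearing in these identities are actually preserved. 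Once this separation-of-variables construction is secured, the two morality transfers above are formal, and the theorem follows.
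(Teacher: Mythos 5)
Your overall strategy is the right one: this theorem is exactly Malliaris--Shelah's ``separation of variables,'' proved by realizing $\mathcal{B}$ as a quotient of $\mathcal{P}(\lambda)$ by a regular filter $\mathcal{D}_0$, inducing $\mathcal{V} = \mathbf{j}^{-1}(\mathcal{U})$, and transferring morality across $\mathbf{j}$ in both directions. The paper does not reprove the theorem (it cites Theorem 6.13 of \cite{DividingLine}), but it re-instantiates the same mechanism inside the proof of Lemma~\ref{Th5}, where it chooses ``a regular \emph{good} filter $\mathcal{D}_0$ on $\mathcal{P}(\lambda)$ and an isomorphism $\mathbf{j}: \mathcal{P}(\lambda)/\mathcal{D}_0 \cong \mathcal{B}$.'' That word \emph{good} is exactly what is missing from your proposal, and it is not a technicality: it is the entire content of the step you yourself identify as the main obstacle. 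When you pull a multiplicative refinement $(\mathbf{b}'_s)$ back from $\mathcal{U}$ along $\mathbf{j}$, the preimages refine the original distribution $(\mathbf{a}_s)$ only modulo $\mathcal{D}_0$-null sets; after intersecting with the $\mathbf{a}_s$ to restore genuine refinement, multiplicativity fails by ``error sets'' that all lie in the filter $\mathcal{D}_0$. Goodness of $\mathcal{D}_0$ is precisely the assertion that every monotone $[\lambda]^{<\aleph_0}$-indexed family inside $\mathcal{D}_0$ admits a multiplicative refinement, and this is what repairs the pulled-back family into an honest multiplicative refinement in $\mathcal{V}$. Neither the ``genericity'' of the independent family nor the $<\theta$-distributivity of $\mathcal{B}$ substitutes for this: a merely regular $\mathcal{D}_0$ with $\mathcal{P}(\lambda)/\mathcal{D}_0 \cong \mathcal{B}$ would not make the positive (saturation of $T_1$) direction go through.

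Two further points you gloss over. First, the existence of a regular good (excellent) filter $\mathcal{D}_0$ on $\lambda$ with $\mathcal{P}(\lambda)/\mathcal{D}_0 \cong \mathcal{B}_{2^\lambda,\mu,\theta}$ is itself a substantial theorem --- a Kunen-style transfinite induction consuming an independent family of size $2^\lambda$ while simultaneously securing goodness and controlling the quotient --- and this is where the hypotheses $\lambda \geq \mu \geq \theta$ and $|\mathcal{B}| \leq 2^\lambda$ actually get used; it cannot be dismissed as the ``standard construction of independent families.'' Second, in the non-morality direction, lifting a $(\mathcal{B}, T_0, \overline{\phi})$-possibility to a genuine distribution over the ultrapower is not accomplished by taking set-theoretic preimages under a surjection: one must choose parameter sequences in $M^\lambda$ realizing the prescribed consistency pattern coordinatewise (using that a possibility is, by definition, locally realizable in a model of $T_0$), so that the lifted family is the \L o\'s map of an actual type whose realization would push forward to the forbidden multiplicative refinement. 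With goodness of $\mathcal{D}_0$ installed and these two steps carried out, your argument becomes the standard proof.
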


Actually, with a little cardinal arithmetic one can show that this holds if $\mathcal{B}_{2^\lambda, \mu, \theta}$ is replaced by any complete Boolean algebra $\mathcal{B}$ with $|\mathcal{B}| \leq 2^\lambda$, but we won't need this.

We now indicate the various facts from \cite{DividingLine}, \cite{Optimals}, and \cite{InfManyClass} we will need to get our theorem. First, we say that the complete countable theory $T$ is low if it is simple and for every formula $\phi(x, \overline{y})$, there is some $k$ such that for all $\overline{b}$, if $\phi(x, \overline{b})$ does not $k$-divide over $\emptyset$ then it does not divide over $\emptyset$. This is the standard definition of low, for instance it is equivalent to the definition in \cite{Buechler}. Malliaris defined low slightly differently in \cite{MalliarisFlex}, namely not requiring $T$ to be simple. To clarify, let us say that $T$ has the finite dividing property if there is some formula $\phi(x, \overline{y})$ such that for every $k$ there is some indiscernible sequence $(\overline{b}_n: n < \omega)$ over the emptyset such that $\{\phi(x, \overline{b}_n: n < \omega)\}$ is $k$-consistent but not consistent. What Malliaris calls low is what we call not having the finite dividing property; and we say that a theory $T$ is low if it is simple and does not have the finite dividing property. 

One reason to prefer our definition is the following: it is easy to check that $(\mathbb{Q}, <)$ does not have the finite dividing property, and so the finite dividing property is not a dividing line in Keisler's order. On the other hand, in future work we will show that lowness is a dividing line in Keisler's order.

The following theorem is a special case of Conclusion 9.10 from \cite{DividingLine}:

\begin{theorem}\label{Th1}
Let $T$ be any non-low theory and let $\lambda > \aleph_0$. Then no ultrafilter $\mathcal{U}$ on $\mathcal{B} := \mathcal{B}_{2^\lambda, \aleph_0, \aleph_0}$ is $(\lambda, \mathcal{B}, T)$-moral.
\end{theorem}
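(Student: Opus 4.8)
The plan is to assume, toward a contradiction, that some ultrafilter $\mathcal{U}$ on $\mathcal{B}=\mathcal{B}_{2^\lambda,\aleph_0,\aleph_0}$ is $(\lambda,\mathcal{B},T)$-moral, and to defeat it using the finite dividing property. Since $T$ is non-low I would work with the case that $T$ has the finite dividing property, which is the one relevant to $T_{cas}$ (a non-simple $T$ carries an at-least-as-strong dividing configuration, handled similarly). So fix $\phi(x,\overline{y})$ and $k$ together with an indiscernible sequence whose $\phi$-instances are $k$-consistent but not consistent, hence $m$-inconsistent for some $m>k$. From this I would build a $(\mathcal{B},T,\phi)$-possibility $\langle \mathbf{a}_s : s\in[\lambda]^{<\aleph_0}\rangle$ encoding $\lambda$ instances $\phi(x,\overline{b}_\alpha)$ in the $\mathcal{B}$-valued model, arranged so that each instance carries a generic ``color'' read off a fixed set $C$ of coordinates, and so that the Boolean value $\mathbf{a}_s$ of consistency of $\bigwedge_{\alpha\in s}\phi(x,\overline{b}_\alpha)$ is governed by the dividing: the conjunction is consistent unless more than $k$ of its instances receive the same color. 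Because any fixed finite $s$ is generically assigned distinct colors, each $\mathbf{a}_s\in\mathcal{U}$, so $\langle \mathbf{a}_s\rangle$ is a legitimate possibility.

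By morality, $\langle \mathbf{a}_s\rangle$ has a multiplicative refinement $\langle \mathbf{a}'_s\rangle$ with each $\mathbf{a}'_s\in\mathcal{U}$ and $\mathbf{a}'_s=\bigwedge_{\alpha\in s}\mathbf{a}'_{\{\alpha\}}\leq \mathbf{a}_s$. Here I would invoke the defining feature of $\mathcal{B}_{2^\lambda,\aleph_0,\aleph_0}$: its conditions are the finite partial functions $2^\lambda\to\omega$, and the basic elements $\mathbf{x}_f$ are dense. So for each $\alpha<\lambda$ I choose a finite condition $f_\alpha$ with $\mathbf{x}_{f_\alpha}\leq \mathbf{a}'_{\{\alpha\}}$, extending it if necessary so that $\dom f_\alpha\supseteq C$ and $f_\alpha$ decides the color of instance $\alpha$. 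Since a finite condition pins down only finitely much of the generic, the color it forces is drawn from a countable set.

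Now comes the combinatorial core. Since $\lambda>\aleph_0$, the $\Delta$-system lemma gives an uncountable subfamily of the $f_\alpha$ with a common finite root $r$; because $C\subseteq\dom f_\alpha$ for every $\alpha$ we have $C\subseteq r$. As there are only countably many functions $r\to\omega$, a further uncountable subfamily agrees on all of $r$, and hence (the color being read off $C\subseteq r$) all share one color $c$. These conditions are jointly compatible, so for any $m$ of them, indexed by a set $W$, the union $\bigcup_{\alpha\in W} f_\alpha$ is again a condition and $\mathbf{x}_{\bigcup_{\alpha\in W} f_\alpha}=\bigwedge_{\alpha\in W}\mathbf{x}_{f_\alpha}\leq \mathbf{a}'_W\leq \mathbf{a}_W$. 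But this condition forces all $m$ instances into the single color $c$, where by the finite dividing property their conjunction is inconsistent; that is, $\mathbf{x}_{\bigcup_{\alpha\in W} f_\alpha}\leq\neg \mathbf{a}_W$. Together these give $\mathbf{x}_{\bigcup_{\alpha\in W} f_\alpha}=0$, contradicting that it is a genuine condition.

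The step I expect to be the main obstacle is the first one: producing the possibility and verifying that it really is a $(\mathcal{B},T,\phi)$-possibility. One must define the $\mathcal{B}$-valued parameters so that, on one hand, every fixed finite conjunction stays consistent (keeping each $\mathbf{a}_s\in\mathcal{U}$, so that morality applies), while on the other hand the color of each instance is determined by a finite amount of generic information over a fixed region and the dividing threshold is triggered exactly by color-coincidence, so that the collapse above is forced against \emph{every} refinement rather than a single chosen one. This is precisely where both hypotheses are used: $\theta=\aleph_0$ forces colors to come from a countable set (finite conditions decide only finitely much), and $\lambda>\aleph_0$ supplies enough instances to overflow it, which is why, by contrast, the algebras $\mathcal{B}_{2^\lambda,\mu,\theta}$ with larger $\theta$ can carry moral ultrafilters for non-low theories. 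Making these two requirements coexist --- the Boolean-valued incarnation of Malliaris's flexibility obstruction --- is the delicate heart of the argument.
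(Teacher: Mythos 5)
The paper offers no proof of Theorem~\ref{Th1}; it is quoted as a special case of Conclusion 9.10 of \cite{DividingLine}. So there is nothing to compare your argument against line by line, and I will assess it on its own terms: as written it has a fatal gap, and the quickest way to see this is that it proves too much. Your construction uses only a \emph{single} dividing configuration --- one formula $\phi$, one $k$, one indiscernible sequence that is $k$-consistent but $m$-inconsistent --- and such a configuration exists in every simple unstable theory, in particular in the low theories $T_{n,k}$ (take the instances $R(x,\overline{b})$ for $\overline{b}$ ranging over the $(k-1)$-subsets of an $(n-1)$-clique, stretched to an indiscernible sequence). If your argument were sound it would therefore show that no ultrafilter on $\mathcal{B}_{2^{\aleph_{k-2}},\aleph_0,\aleph_0}$ is $(\aleph_{k-2},\mathcal{B},T_{n,k})$-moral, contradicting Theorem~\ref{Th2}. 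Non-lowness is never actually used: the finite dividing property is the statement that the consistency thresholds are \emph{unbounded} in $k$, and any correct proof must exploit that unboundedness, typically by making the threshold nonstandard. Concretely, one uses the $\aleph_1$-c.c.\ of $\mathcal{B}$ to find a decreasing sequence $\mathbf{d}_0\geq\mathbf{d}_1\geq\cdots$ in $\mathcal{U}$ with $\bigwedge_k\mathbf{d}_k=0$, and builds the possibility so that below $\mathbf{d}_k\setminus\mathbf{d}_{k+1}$ the level-$k$ witness to the finite dividing property is used; then $\mathbf{a}_s\geq\mathbf{d}_{|s|}$, which is what legitimately puts every $\mathbf{a}_s$ into an \emph{arbitrary} ultrafilter. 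This ingredient is entirely absent from your sketch.

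The place where your argument breaks locally is the step you yourself flag: the claim that each $\mathbf{a}_s\in\mathcal{U}$ because ``any fixed finite $s$ is generically assigned distinct colors.'' The Boolean value of ``the instances in $s$ receive distinct colors'' is not $1$, and nothing forces it into $\mathcal{U}$: the conditions $[[\mathrm{color}(\alpha)=0]]$, $\alpha<\lambda$, are pairwise compatible, so some ultrafilter contains all of them, and for such a $\mathcal{U}$ one has $\mathbf{a}_s\notin\mathcal{U}$ as soon as $|s|\geq m$, so morality says nothing about your sequence. (If instead you rig the parameters so that distinctness of colors has Boolean value $1$, then no condition forces two instances to share a color and your final pigeonhole step collapses.) There is also an internal incoherence in reading all colors off a single fixed finite set $C$ of coordinates: after the $\Delta$-system and pigeonhole steps all your conditions agree on $C$, so they cannot be distinguishing colors of different instances there at all; in the standard encoding each instance has its own coordinate, the decided color lives in the petal rather than the root, and --- crucially --- coincidence of colors (positions in the indiscernible sequence) makes instances \emph{equal}, hence the conjunction \emph{consistent}, which is the opposite of what your contradiction requires. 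The correct contradiction instead comes from a counting argument against the nonstandard threshold, i.e., from showing the induced regular ultrafilter is not flexible; I would encourage you to consult Section 9 of \cite{DividingLine} for how the pieces actually fit together.
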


Now fix $n > k \geq 3$ for the rest of this paper; so $T_{n,k}$ is the theory of the random $n$-clique free $k$-ary graph. (In \cite{InfManyClass}, this theory is referred to as $T_{n-1, k-1}$, but we stick to the more common notation. In particular this affects the indicing for all the theorems we quote.) We will need the following two theorems from \cite{InfManyClass}. The following is (a special case of) Theorem 4.1.

\begin{theorem}\label{Th2}
Write $\lambda = \aleph_{k-2}$. There is an ultrafilter $\mathcal{U}$ on $\mathcal{B} := \mathcal{B}_{2^\lambda, \aleph_0, \aleph_0}$ which is $(\lambda, \mathcal{B}, T_{n, k})$-moral.
\end{theorem}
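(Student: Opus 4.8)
The plan is to build $\mathcal{U}$ by transfinite recursion while reducing morality to a purely combinatorial statement tailored to $T_{n,k}$. First I would unwind the definition: $\mathcal{U}$ is $(\lambda, \mathcal{B}, T_{n,k})$-moral precisely when every $(\mathcal{B}, T_{n,k}, \overline{\phi})$-possibility $(\mathbf{a}_s : s \in [\lambda]^{<\aleph_0})$ has a multiplicative refinement lying in $\mathcal{U}$. Since $T_{n,k}$ is the Fra\"iss\'e limit of the finite $n$-clique-free $k$-ary graphs it has quantifier elimination, so I would argue that it suffices to treat possibilities generated by the single edge relation $R$: every instance of dividing is controlled by $R$, and a finite type over a model of $T_{n,k}$ amounts to a finite $n$-clique-free $k$-ary graph on the parameters together with the new vertex $x$. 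This reduces the problem to the following: given the Boolean-valued data recording, for each finite $s$, the value of ``the $k$-ary edge pattern indexed by $s$ is realizable by a single $x$,'' produce a multiplicative system $(\mathbf{b}_s)$ below it inside $\mathcal{U}$.

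Next I would construct $\mathcal{U}$ by a recursion that enumerates all such possibilities and all elements of $\mathcal{B}$, at each stage committing a new element so as to furnish a multiplicative refinement of the possibility under consideration (alternatively, one could invoke the existence of an optimal ultrafilter from \cite{Optimals} and verify the relevant condition directly). Here the structure of the algebra is essential: $\mathcal{B}_{2^\lambda, \aleph_0, \aleph_0}$ is the completion of the finite partial functions from $2^\lambda$ into $\omega$, so it has the countable chain condition and is generated by a family of $2^\lambda$ independent coordinates. This abundance of independent generators is what lets me choose the refinement elements $\mathbf{b}_s$ pairwise compatibly and multiplicatively, and the monotonicity and coherence built into the notion of a possibility ensure that at each finite level a candidate refinement exists; the work is to keep all of these commitments consistent with a single ultrafilter.

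The crux --- and the only place the hypothesis $\lambda = \aleph_{k-2}$ is used --- is verifying that the successor step never fails, i.e.\ that one can always separate the finitely many $k$-ary constraints appearing in a possibility while respecting the global $n$-clique-free condition, using only the available generators. I expect this to be the main obstacle. It reduces to a coloring/free-set statement about the $(k-1)$-tuples from $\lambda$ that index the possible edges through $x$: the families of configurations one must simultaneously avoid grow in a $(k-1)$-dimensional fashion, and $\aleph_{k-2}$ is exactly the threshold below which a suitable countable coloring of $[\lambda]^{k-1}$ avoiding clique-inducing homogeneous sets exists, in the spirit of the Erd\H{o}s--Rado stepping-up arguments underlying \cite{InfManyClass}. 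Concretely I would isolate such a combinatorial principle $P_{n,k}(\lambda)$, establish $P_{n,k}(\aleph_{k-2})$ by induction on $k$, and feed it into the recursion to complete each multiplicative refinement, thereby producing the desired moral ultrafilter $\mathcal{U}$.
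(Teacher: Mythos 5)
First, a point of comparison: the paper does not prove this statement at all --- it is quoted verbatim as a special case of Theorem 4.1 of \cite{InfManyClass}, so there is no in-paper argument to measure your proposal against. Judged on its own terms, your sketch correctly identifies the general shape of what Malliaris and Shelah actually do (the difficulty is concentrated in a combinatorial statement about $[\lambda]^{k-1}$ that holds exactly when $\lambda \leq \aleph_{k-2}$, in the vicinity of the Kuratowski--Sierpi\'nski free-set theorems, and the ultrafilter is built by a long recursion on the atomless $\aleph_1$-c.c.\ algebra $\mathcal{B}_{2^\lambda,\aleph_0,\aleph_0}$). But as a proof it has two genuine gaps, and they are precisely the two places where all of the content lives.

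The first gap is the recursion itself. You say ``the work is to keep all of these commitments consistent with a single ultrafilter,'' and that is exactly the problem: a naive enumeration of possibilities, choosing a multiplicative refinement for each in turn, gives no reason why the element committed at stage $\alpha$ does not destroy the ability to refine a possibility handled at stage $\beta>\alpha$, and the c.c.c.\ plus the independence of the generators do not by themselves resolve this. This is why \cite{InfManyClass} and \cite{Optimals} develop the machinery of perfect/optimal ultrafilters, which reduces ``has a multiplicative refinement in $\mathcal{U}$'' to a condition about refinements in the ambient algebra that can be verified possibility-by-possibility without interference; you would need to either reprove or import that reduction. The second gap is the crux you flag yourself: the combinatorial principle $P_{n,k}(\lambda)$ is never formulated, and ``establish $P_{n,k}(\aleph_{k-2})$ by induction on $k$'' is a placeholder for the hardest part of the argument. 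Moreover your candidate formulation (``a countable coloring of $[\lambda]^{k-1}$ avoiding clique-inducing homogeneous sets'') is not obviously the right statement, and the claim that $\aleph_{k-2}$ is ``exactly the threshold'' is asserted rather than argued; note that the matching negative direction at $\sigma^{+(n-1)}$ (Theorem~\ref{Th3}) depends on $n$, not on $k$ alone, which already suggests the threshold analysis is more delicate than a single coloring statement in dimension $k-1$. As it stands the proposal is a plausible plan for rediscovering Theorem 4.1 of \cite{InfManyClass}, not a proof of it.
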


The following is not literally a special case of Claim 5.1 from \cite{InfManyClass}, but it has exactly the same proof. It would be a special case if we had $\sigma = \aleph_0$, but all that is used in the proof is that the revelant algebra $\mathcal{B}$ has the $\sigma^+$-c.c., which follows from $\sigma = \sigma^{<\sigma}$. 

\begin{theorem}\label{Th3}
Suppose $\sigma = \sigma^{<\sigma}$. Write $\lambda = \sigma^{+(n-1)}$. Then no ultrafilter $\mathcal{U}$ on $\mathcal{B} := \mathcal{B}_{2^\lambda, \sigma, \sigma}$ is $(\lambda, \mathcal{B}, T_{n, k})$-moral.
\end{theorem}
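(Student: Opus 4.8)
The plan is to prove non-morality by transporting the classical negative argument into the Boolean-algebra framework of \cite{DividingLine}: for an arbitrary ultrafilter $\mathcal{U}$ on $\mathcal{B} = \mathcal{B}_{2^\lambda, \sigma, \sigma}$ I would exhibit a single $(\mathcal{B}, T_{n,k}, \overline{\phi})$-possibility, with values in $\mathcal{U}$, that admits no multiplicative refinement in $\mathcal{U}$. Following the remark preceding the statement, the only feature of $\mathcal{B}$ I would use is the $\sigma^+$-c.c., which holds because $\sigma = \sigma^{<\sigma}$. The formula is the edge relation itself: set $\phi(x; y_1, \dots, y_{k-1}) = R(x, y_1, \dots, y_{k-1})$ and read $\lambda$ as an index set of potential parameters.

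First I would build the distribution. Read $\lambda$ as a family of parameters $(a_\alpha : \alpha < \lambda)$ that is \emph{globally} an independent set, so that the positive type $p(x) = \{R(x, a_{\alpha_1}, \dots, a_{\alpha_{k-1}}) : \alpha_1 < \dots < \alpha_{k-1} < \lambda\}$ is finitely consistent: since $n > k$, connecting a single new vertex to all the $(k-1)$-subsets of a finite independent configuration can never complete an $n$-clique. Using the $2^\lambda$ mutually independent coordinates of $P_{2^\lambda, \sigma, \sigma}$ and its generators $\mathbf{x}_f$, I would spread this type across $\mathcal{B}$, letting $\mathbf{a}_s$ (for $s \in [\lambda]^{<\aleph_0}$) be the $\mathcal{U}$-value recording that the finitely many instances of $\phi$ indexed by the $(k-1)$-subsets of $s$ are jointly satisfiable; the resulting $(\mathbf{a}_s)$ is a genuine $(\mathcal{B}, T_{n,k}, \overline{\phi})$-possibility. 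The construction is arranged so that, in opposition to the global independence of the $a_\alpha$, at each coordinate the local representatives — living in a model of the generic theory — are rich in $(n-1)$-cliques. This global-independence-against-local-clique tension is the engine of the proof.

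The heart of the argument is to assume a multiplicative refinement $(\mathbf{b}_s)$ of $(\mathbf{a}_s)$ in $\mathcal{U}$ and derive a contradiction with $n$-clique-freeness. Multiplicativity forces the refinement to behave coordinatewise like a single realizing vertex connected to all the $(k-1)$-subtuples in play; but completing such connections over a local $(n-1)$-clique would produce an $n$-clique, so at every coordinate the refinement must refrain from connecting to all $(k-1)$-subsets of any $(n-1)$-clique among the local representatives. Here I would invoke the $\sigma^+$-c.c., which bounds every antichain, and hence the number of ``colors'' available to the refinement at each stage, by $\sigma$. Because $\lambda = \sigma^{+(n-1)}$ — one successor of $\sigma$ for each of the $n-1$ further vertices of a would-be clique — the iterated partition/pigeonhole argument of Claim 5.1 of \cite{InfManyClass} then locates an $n$-element index set $\{\alpha_1, \dots, \alpha_n\}$ at a common coordinate on which the refinement is forced to connect one realizing vertex to all $(k-1)$-subsets of an $(n-1)$-clique. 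This manufactures an $n$-clique in a model of $T_{n,k}$, a contradiction; hence $\mathbf{b}_{\{\alpha_1, \dots, \alpha_n\}} = 0$, contradicting $\mathbf{b}_s \in \mathcal{U}$.

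I expect the main obstacle to be exactly this combinatorial extraction: organizing the bookkeeping so that the $\sigma^+$-c.c. genuinely caps the relevant index of the partition relation at $\sigma$, and so that the iterated pigeonhole running through the $n-1$ successors of $\sigma$ delivers precisely the $(n-1)$-clique-plus-realizer configuration that $n$-clique-freeness forbids; the construction of the possibility and the verification of finite consistency are comparatively routine. I would emphasize throughout — as the remark before the statement signals — that neither the construction nor the extraction uses $\sigma = \aleph_0$: both use only that $\mathcal{B}$ has the $\sigma^+$-c.c., which $\sigma = \sigma^{<\sigma}$ supplies, so the proof of Claim 5.1 of \cite{InfManyClass} goes through verbatim with $\aleph_0$ replaced by $\sigma$.
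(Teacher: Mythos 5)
Your proposal matches the paper's treatment: the paper gives no independent proof of Theorem~\ref{Th3}, but simply observes that the proof of Claim 5.1 of \cite{InfManyClass} applies verbatim once one notes that the only property of the algebra used there is the $\sigma^+$-c.c., which $\sigma = \sigma^{<\sigma}$ guarantees for $\mathcal{B}_{2^\lambda,\sigma,\sigma}$ --- exactly the reduction you make, and your sketch of the internals (independent parameter set, c.c.\ bounding antichains by $\sigma$, iterated pigeonhole through the $n-1$ successors yielding a forbidden clique configuration) is a fair account of that claim's argument.
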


Thus, by Theorem~\ref{Th1} and Theorem~\ref{Th2}, we have (in just ZFC) that if $T$ is any non-low theory then $T \not \trianglelefteq T_{n, k}$. (This is a special case of the fact that low theories form a dividing line in Keisler's order, as mentioned previously.) Going forward, we take the most natural example of a simple non-low theory $T_{cas}$, and show that if there is a supercompact cardinal $\sigma$, then $T_{n,k} \not \trianglelefteq T_{cas}$. By Theorem~\ref{Th3} note it will suffice to show that if $\sigma$ is supercompact and $\lambda \geq \sigma$, then there is an ultrafilter on $\mathcal{B}_{2^\lambda, \sigma \sigma}$ which is $(\lambda, \mathcal{B}, T_{cas})$-moral.

$T_{cas}$ was introduced by Casanovas \cite{Casanovas} and was in fact the first example of a simple nonlow theory. The language $\mathcal{L}_{cas}$ is $(R, P, I, I_n: 1 \leq n < \omega)$, where $P, I, I_n$ are each unary relation symbols and $R$ is binary. We adopt the convention that $a, a',\ldots$ are elements of $P$, $b, b', \ldots$ are elements of $I$.

\begin{enumerate}
\item The universe is the disjoint union of $P$ and $I$, both infinite;
\item Each $I_n \subseteq I$, and the $I_n$'s are infinite and disjoint;
\item $R \subseteq P \times I$;
\item For each $a \in P$ and for each $n < \omega$, there are exactly $n$ elements $b \in I_n$ such that $R(a,b)$;
\item Whenever $B_0, B_1$ are finite disjoint subsets of $I$ such that each $|B_1 \cap I_n| \leq n$, there is $a \in P$ such that $R(a,b)$ for all $b \in B_1$ and $\lnot R(a, b)$ for all $b \in B_0$.
\item For all $A_0, A_1$ finite disjoint subsets of $P$, there is $b \in I$ such that $R(a, b)$ for all $a \in A_1$ and $\lnot R(a, b)$ for all $a  \in A_0$.
\end{enumerate}

Actually, if in the definition of $\mathcal{L}_{cas}$ we allow $n = 0$ then $I_0$ will be completely harmless, so for notational convenience we let $\mathcal{L}_{cas}$ be $(R, P, I_n: n < \omega)$.

In \cite{Casanovas} it is shown that $T_{cas}$ is complete, and is the model companion of the theory axiomatized by the first four items above. In particular it is shown that $T_{cas}$ has quantifier elimination in an expanded language, where we add predicates $S_{\ldots}$ that express the following: given $A_0, A_1 \subset P$ finite disjoint with $A_0 \not= \emptyset$, how many $b \in I_n$ are there such that $R(a, b)$ for all $a \in A_1$ and $\lnot R(a, b)$ for all $a \in A_0$. Thus the algebraic closure of a set $X$ is $X \cup \bigcup\{b \in \bigcup_n I_n: \mbox{ there is } a \in X \cap P \mbox{ such that } R(a, b)\}$, and every formula over a set $X$ is equivalent to a quantifier-free formula over $\mbox{acl}(X)$.

Casanovas also shows that $T_{cas}$ is simple with the following forking relation: $X \forkindep_{Z} Y$ iff $\mbox{acl}(X) \cap \mbox{acl}(Y) \subseteq \mbox{acl}(Z)$. Clearly also the formula $R(x, y)$ witnesses that $T_{cas}$ is not low. In future work we will show that $T_{cas}$ is a minimal nonlow theory in Keisler's order.

We will want the following lemma, which follows immediately from the quantifier elimination in the expanded language discussed above:

\begin{lemma}\label{TCasLemma1}
Let $M \models T_{cas}$. As notation let $T_\omega$ denote $I \backslash \bigcup_n I_n$.
\begin{itemize} 
\item For each $n < \omega$, there is a unique nonalgebraic type $p(x)$ over $M$ with $I_{n}(x) \in p(x)$. It is isolated by the formulas $I_n(x)$ together with $\lnot R(a, x)$ for each $a \in P^M$.
\item For each $A \subseteq P^M$ let $p_A(x)$ be the type over $M$ that says $I_\omega(x)$ holds, $x \not= b$ for each $b \in I^M$, and finally for each $a \in P^M$, $R(a, x)$ holds iff $a \in A$. Then $p_A(x)$ generates a complete type over $M$ that does not fork over $\emptyset$. Moreover, all nonalgebraic complete types over $M$ extending $\{I(x)\} \cup \bigcup_n\{\lnot I_n(x)\}$ are of this form. 
\item Suppose $B \subseteq I^{M}$ is such that each $|B \cap I^{M}_n| \leq n$. Let $p_B(x)$ be the type over $M$ that says $P(x)$ holds, and $x \not= a$ for each $a \in P^M$, and for each $b \in I^{M}$, $R(x, b)$ holds iff $b \in B$. Then $p_B(x)$ generates a complete type over $M$, and moreover every complete nonalgebraic type over $M$ extending $P(x)$ is of this form. Further, given $M_0 \subseteq M$, we have that $p(x)$ does not fork over $M_0$ iff for each $n < \omega$, $B \cap I_n^{M_0} = B \cap I_n^M$.
\end{itemize}
\end{lemma}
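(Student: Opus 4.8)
The plan is to prove Lemma~\ref{TCasLemma1} by repeatedly invoking the quantifier elimination in the expanded language together with Casanovas's description of $\acl$ and forking, which are all quoted above. The three bullets are essentially a case analysis over the sort of the realized element: elements of $I_n$ for finite $n$, elements of $I_\omega = I \setminus \bigcup_n I_n$, and elements of $P$. In each case I would first argue that the listed formulas determine a complete type, and then (where relevant) compute which types are nonforking using the criterion $X \forkindep_Z Y \iff \acl(X) \cap \acl(Y) \subseteq \acl(Z)$.

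\medskip

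\noindent\emph{First bullet.} I would note that if $I_n(x)$ holds, then $\acl(x) = \{x\}$ for a nonalgebraic $x$, and the only quantifier-free information about such an $x$ over $M$ is, for each $a \in P^M$, whether $R(a,x)$ holds. By axiom (4), each $a \in P^M$ is $R$-related to exactly $n$ elements of $I_n$, all of which lie in $\acl(a) \subseteq M$; hence for a nonalgebraic $x \in I_n$ we must have $\lnot R(a,x)$ for every $a \in P^M$. Thus the quantifier-free type is forced, and by quantifier elimination there is exactly one such complete type, isolated as stated.

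\medskip

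\noindent\emph{Second bullet.} For $x \in I_\omega$ with $x \neq b$ for all $b \in I^M$, the only quantifier-free data over $M$ is again the set $A = \{a \in P^M : R(a,x)\}$, and now axiom (5) (which places no constraint on $R$-neighborhoods inside $I_\omega$) shows every subset $A \subseteq P^M$ is realized, so the $p_A$ exhaust the nonalgebraic types over $M$ extending $I(x) \wedge \bigwedge_n \lnot I_n(x)$. For nonforking I would compute $\acl(x) = \{x\}$ (as $x \in I_\omega$ is algebraic over nothing new and forces no new elements), whence $\acl(M x) \cap \acl(M) = M$, giving $x \forkindep_\emptyset M$; so $p_A$ never forks over $\emptyset$. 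The quoted quantifier elimination handles completeness directly.

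\medskip

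\noindent\emph{Third bullet.} For $x \in P$ nonalgebraic the quantifier-free type records $B = \{b \in I^M : R(x,b)\}$, and the constraint $|B \cap I_n^M| \le n$ is exactly the consistency condition coming from axiom (4); conversely axiom (6)/(5) realizes every such $B$, and completeness follows from quantifier elimination. The nonforking criterion is the one genuinely computational point: I would use that $\acl(x)$ adds to $x$ precisely the elements $b \in \bigcup_n I_n$ with $R(x,b)$, i.e.\ $B \cap \bigcup_n I_n = B$ (since $B \cap I_\omega$ contributes nothing algebraic). Then $\acl(M_0 x) \cap \acl(M) \subseteq \acl(M_0)$ translates, via the $\acl$ formula, into the condition that no new finite-sort neighbor of $x$ lies in $M \setminus \acl(M_0)$, which unwinds to $B \cap I_n^{M_0} = B \cap I_n^M$ for all $n$. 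I expect this last equivalence to be the main obstacle: one must carefully separate the contribution of $I_\omega$-neighbors (which are never algebraic over $x$ and so irrelevant to forking) from the finite-$I_n$ neighbors (which are in $\acl(x)$ and therefore control forking), and check both directions of the criterion against Casanovas's $\acl$ and $\forkindep$ descriptions.
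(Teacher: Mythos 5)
Your proposal is correct and takes essentially the same route as the paper, which in fact offers no proof beyond asserting that the lemma ``follows immediately from the quantifier elimination in the expanded language''; your case analysis via the description of $\acl$ and the criterion $X \forkindep_Z Y \iff \acl(X)\cap\acl(Y)\subseteq\acl(Z)$ is exactly the intended argument. Only cosmetic repairs are needed: in the second bullet the relevant extension axiom is (6) rather than (5) and the nonforking computation should read $\acl(x)\cap\acl(M)=\{x\}\cap M=\emptyset\subseteq\acl(\emptyset)$; and in the third bullet it is worth stating explicitly that the $I_n$-neighbors of $x$ lying outside $M$ are $R$-related to no element of $P^M$ (since each $a\in P^M$ has all of its $I_n$-neighbors in $\acl(a)\subseteq M$), which is the observation that pins down the values of the auxiliary $S$-predicates on $x$ and hence the completeness of $p_B$.
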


From this lemma we get the following characterization of the saturated models of $T_{cas}$.

\begin{lemma}\label{TCasLemma1v2}
 $M \models T_{cas}$ is $\lambda^+$-saturated iff:

\begin{enumerate}
\item $|I_\alpha| \geq \lambda^+$ for each $\alpha \leq \omega$,
\item For all $B_0, B_1 \subseteq I^M$ disjoint with each $|B_i| \leq \lambda$, and with each $|B_1 \cap I_n| \leq n$, there is $a \in P$ such that $R(a, b)$ for each $b \in B_1$, and $\lnot R(a, b)$ for each $b \in B_0$.
\item For all $A_0, A_1 \subseteq P^M$ disjoint with each $|A_i| \leq \lambda$, there is $b \in I_\omega$ such that $R(a, b)$ for each $a \in A_1$ and $\lnot R(a, b)$ for each $a \in A_0$.
\end{enumerate}
\end{lemma}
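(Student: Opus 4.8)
The plan is to use the standard fact that $M$ is $\lambda^+$-saturated iff every complete $1$-type $p(x) \in S_1(A)$ over a parameter set $A \subseteq M$ with $|A| \le \lambda$ is realized in $M$, and then to read off, via the quantifier elimination and the type classification of Lemma~\ref{TCasLemma1}, exactly which types must be realized. First I would record two reductions. Since $\textrm{acl}(A) = A \cup \{b \in \bigcup_n I_n : \exists a \in A\cap P,\ R(a,b)\}$ and each $a \in P$ meets each $I_n$ in exactly $n$ points, $|\textrm{acl}(A)| \le |A| + \aleph_0 \le \lambda$; and by QE in the expanded language $p$ is determined by its quantifier-free type over $\textrm{acl}(A)$. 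Algebraic types are realized inside $\textrm{acl}(A) \subseteq M$ for free, so only the nonalgebraic types matter. Running Lemma~\ref{TCasLemma1} with $\textrm{acl}(A)$ in place of $M$, every nonalgebraic $p$ falls into one of three sorts according to whether it contains $I_n(x)$ (some $n$), $I_\omega(x)$, or $P(x)$, and in each case $p$ is pinned down by a prescribed $R$-pattern together with the inequalities $x \ne c$ forced by nonalgebraicity.

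For the forward direction I would assume saturation and, for each of (1)--(3), exhibit a type over a set of size $\le \lambda$ whose realization gives the desired element. For (1), realize $\{I_\alpha(x)\} \cup \{x \ne b : b \in A\}$ for an arbitrary $A \subseteq I_\alpha^M$ of size $\le \lambda$ (consistent because each $I_\alpha$, $\alpha \le \omega$, is infinite in models of $T_{cas}$), producing a new element of $I_\alpha^M$ and hence $|I_\alpha^M| \ge \lambda^+$. For (2) and (3) I realize the evident pattern-types over $B_0 \cup B_1$ resp.\ $A_0 \cup A_1$, whose consistency is exactly axioms 5 and 6 (equivalently bullets 3 and 2 of Lemma~\ref{TCasLemma1}).

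For the converse I would assume (1)--(3) and realize each of the three sorts of nonalgebraic type. For the $P(x)$ sort I apply (2) with $B_1$ the prescribed set and $B_0$ its complement in $I^{\textrm{acl}(A)}$; the resulting $a$ has the exact $R$-pattern defining $p$, so by the completeness asserted in Lemma~\ref{TCasLemma1} it realizes $p$, and nonalgebraicity makes it automatically distinct from $\textrm{acl}(A)$. For the $I_n(x)$ sort I use (1): there are $\ge \lambda^+$ points of $I_n^M$, while at most $\lambda$ of them are $R$-related to $P \cap A$ (each $a$ captures only $n$) or lie in $\textrm{acl}(A)$, so a fresh, $P\cap A$-avoiding witness exists.

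The step I expect to be the real obstacle is the $I_\omega(x)$ sort, because there condition (1) only bounds $|I_\omega^M|$, not the number of points carrying the required $R$-pattern, so naive counting fails to separate the realization from the $\le\lambda$ forbidden points of $A \cap I_\omega$. My intended fix is a two-step move: first use (2) with $B_0$ equal to the forbidden set to produce an $a^\ast \in P$ that is $R$-disconnected from all of them, then apply (3) to the augmented positive set $A_1 \cup \{a^\ast\}$; the resulting $b \in I_\omega$ carries the right pattern on $A\cap P$ yet is forced to differ from every forbidden point by $a^\ast$, so it realizes $p$. Throughout, the delicate background point to verify is that the $R$-pattern over $\textrm{acl}(A)$ alone determines the full quantifier-free type --- including the $S_{\ldots}$-counting predicates --- which is precisely the content of the completeness clauses in Lemma~\ref{TCasLemma1}.
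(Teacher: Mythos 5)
The paper states this lemma without proof (it is presented as an immediate consequence of Lemma~\ref{TCasLemma1}), so there is no official argument to compare against; judged on its own, your outline has the right architecture --- reduce $\lambda^+$-saturation to realizing the three sorts of nonalgebraic types classified in Lemma~\ref{TCasLemma1} --- but it contains one genuine gap, in the $P(x)$ case. You apply (2) with $B_1 = B$ and $B_0 = I^{\acl(A)}\setminus B$ and assert that the resulting witness is ``automatically distinct from $\acl(A)$'' by nonalgebraicity. That does not follow: condition (2) only produces \emph{some} $a^* \in P^M$ with the prescribed pattern, and it can happen that some $a_0 \in A \cap P$ has $R$-pattern exactly $B$ on $I^{\acl(A)}$ (two elements of $P$ can agree on all their $\bigcup_n I_n$-neighbors and be separated only inside $I_\omega$, which $\acl(A)$ need not see), in which case the witness handed to you by (2) may be $a_0$ itself, while $p$ contains $x \neq a_0$. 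The type $p_B$ is still consistent and nonalgebraic in this situation, so the case cannot be waved away. The repair is exactly the two-step move you already use for the $I_\omega$ sort: first produce (via your $I_\omega$ argument, or directly from (1)--(3)) a point $b_1 \in I_\omega^M \setminus \acl(A)$ with $\lnot R(a, b_1)$ for every $a \in A \cap P$, then apply (2) to $B_1 \cup \{b_1\}$ and $B_0$; the requirement $R(a^*, b_1)$ then forces $a^* \notin A\cap P$. (Equivalently, one can first show from (2) and (3) that every $a_0 \in P^M$ has $\lambda^+$ many non-neighbors in $I_\omega^M$.)

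Two smaller points. Inside your $I_\omega$ fix, the auxiliary $a^*$ obtained from (2) with $B_0 = I^{\acl(A)}$, $B_1 = \emptyset$ could itself lie in $A_0 \subseteq A \cap P$, in which case $A_1 \cup \{a^*\}$ and $A_0$ are not disjoint and (3) cannot be applied (no $b$ satisfies both $R(a^*,b)$ and $\lnot R(a^*,b)$); you should also push $a^*$ off $A \cap P$, e.g.\ by additionally demanding $R(a^*, b')$ for some $b' \in I_1^M \setminus \acl(A)$, which exists by (1) and is $R$-related to no element of $A\cap P$ since all $I_1$-neighbors of $A \cap P$ lie in $\acl(A)$. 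Also, since Lemma~\ref{TCasLemma1} is stated for types over \emph{models}, it is cleaner to extend $A$ to an elementary submodel $M_0 \preceq M$ of size $\lambda$ and realize the extended type over $M_0$, rather than re-deriving the classification over $\acl(A)$; this sidesteps the ``delicate background point'' about the counting predicates that you flag at the end. The forward direction and the $I_n$ case of your outline are fine.
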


Let $\sigma$ be supercompact and let $\lambda \geq \sigma$; fix $\sigma$ and $\lambda$ for the rest of the paper. Write $\mathcal{B} = \mathcal{B}_{2^\lambda, \sigma, \sigma}$ and for each $\alpha < 2^\lambda$ let $\mathcal{B}_\alpha = \mathcal{B}_{\alpha, \sigma, \sigma}$. 

We say that the sequence $(\mathbf{a}_s: s \in [\lambda]^{<\sigma})$ from $\mathcal{B}$ \emph{continuous }if $s \subseteq t$ implies $\mathbf{a}_s \geq \mathbf{a}_t > 0$ and also each $\mathbf{a}_s = \bigcap_{t \in [s]^{<\aleph_0}} \mathbf{a}_t$. We say that $(\mathbf{a}_s: s \in [\lambda]^{<\sigma})$ \emph{is in the filter} $\mathcal{D}$ if each $\mathbf{a}_s \in \mathcal{D}$. Note that if $(\mathbf{a}_s: s \in [\lambda]^{<\aleph_0})$ is a $(\mathcal{B}, \lambda)$ distribution in the $\sigma$-complete filter $\mathcal{D}$, then we automatically get a continuous sequence $(\mathbf{a}_s: s \in [\lambda]^{<\sigma})$.

The following definition is the same as $(\lambda, \sigma, \sigma, \sigma)$-optimality in \cite{Optimals} (where we have set $\mu = \theta = \sigma$).
\begin{definition} 
Suppose $\mathcal{U}$ be an ultrafilter on $\mathcal{B}$. Then $\mathcal{U}$ is $(\lambda, \sigma)$-\emph{optimal} if $\mathcal{U}$ is $\sigma$-complete and, for every continuous sequence $(\mathbf{b}_s: s \in [\lambda]^{<\sigma})$ in $\mathcal{U}$, (A) implies (B):

\begin{itemize}
\item[(A)] There is some closed unbounded $\Omega \subset [\lambda]^{<\sigma}$ such that for each $\delta < 2^\lambda$ with $(\mathbf{b}_s: s \in [\lambda]^{<\sigma}) \subseteq  \mathcal{B}_{\delta}$, and there is some multiplicative refinement $(\mathbf{b}'_s: s \in [\lambda]^{<\sigma})$ of $(\mathbf{b}_s: s \in [\lambda]^{<\sigma})$ from $\mathcal{B}$ so that for each $s \in \Omega$, and for each $\mathbf{a} \in \mathcal{B}_\delta$, if $\mathbf{b}_s \cap \mathbf{a}$ is nonzero then $\mathbf{b}'_s \cap \mathbf{a}$ is nonzero.
\item[(B)] $(\mathbf{b}_s: s \in [\lambda]^{<\aleph_0})$ has a multiplicative refinement $(\mathbf{b}'_s: s \in [\lambda]^{<\aleph_0})$ in $\mathcal{U}$.
\end{itemize}
\end{definition}

Note, in (A) this is the same as saying ``for sufficiently large $\delta$, ..."

The following is Theorem 5.9 from \cite{Optimals}:

\begin{theorem}\label{Th4}
Assuming $\sigma$ is supercompact, there is a $(\lambda, \sigma)$-optimal ultrafilter on $\mathcal{B}$.
\end{theorem}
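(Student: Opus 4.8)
The plan is to build the ultrafilter directly from a supercompactness embedding. Since $\sigma$ is supercompact I would fix an elementary embedding $j \colon V \to M$ with critical point $\sigma$, with $j(\sigma)$ larger than $|\mathcal{B}| \le 2^{2^\lambda}$, and with $M$ closed under sequences of length $|\mathcal{B}|$ (so that $j[\mathcal{B}] \in M$, and each $j(\mathcal{B}_\delta)$ sits inside $j(\mathcal{B})$ as a complete subalgebra). I would then define $\mathcal{U}$ as the pullback $\mathcal{U} = \{\mathbf{b} \in \mathcal{B} : \mathbf{c} \le j(\mathbf{b})\}$ for a suitable ``master condition'' $\mathbf{c} \in j(\mathcal{B})$. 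With this definition $\sigma$-completeness is essentially automatic: if $\langle \mathbf{b}_i : i < \gamma\rangle$ with $\gamma < \sigma$ lies in $\mathcal{U}$, then $\gamma < \mathrm{crit}(j)$ forces $j\bigl(\bigwedge_i \mathbf{b}_i\bigr) = \bigwedge_i j(\mathbf{b}_i) \ge \mathbf{c}$ by elementarity, so the meet is again in $\mathcal{U}$.

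The first real task is to choose $\mathbf{c}$ so that $\mathcal{U}$ is genuinely an ultrafilter, i.e.\ so that $\mathbf{c}$ decides every element of the subalgebra $j[\mathcal{B}]$, and moreover so that $\mathbf{c}$ is \emph{generic} over each $j(\mathcal{B}_\delta)$ in the sense that for large $\delta$, membership $\mathbf{b} \in \mathcal{U}$ depends only on the projection $\pi_\delta(\mathbf{b})$ of $\mathbf{b}$ onto $\mathcal{B}_\delta$ (where $\pi_\delta(\mathbf{x}) = \bigwedge\{\mathbf{a} \in \mathcal{B}_\delta : \mathbf{a} \ge \mathbf{x}\}$). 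Here the $\sigma^+$-c.c.\ and $<\sigma$-distributivity of $\mathcal{B}$, together with the closure of $M$, should let me build $\mathbf{c}$ inside $M$ as the limit of a $<\sigma$-descending ``generic'' thread, deciding $j[\mathcal{B}]$ and factoring correctly through the $j(\mathcal{B}_\delta)$. Constructing such a $\mathbf{c}$ is the technical heart of the existence proof.

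For optimality I would take a continuous sequence $\langle \mathbf{b}_s : s \in [\lambda]^{<\sigma}\rangle$ in $\mathcal{U}$ satisfying (A), and show that the refinement supplied by (A) already lies in $\mathcal{U}$. Fix $\delta$ large enough that the sequence lies in $\mathcal{B}_\delta$, and let $\langle \mathbf{b}'_s\rangle$ be the corresponding multiplicative refinement in $\mathcal{B}$. Since a refinement satisfies $\mathbf{b}'_s \le \mathbf{b}_s$, the intersection-preservation clause of (A) says exactly that $\mathbf{b}_s$ and $\mathbf{b}'_s$ have the same projection into $\mathcal{B}_\delta$, i.e.\ $\pi_\delta(\mathbf{b}_s) = \pi_\delta(\mathbf{b}'_s)$, for $s \in \Omega$. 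Because $\mathbf{c}$ was built generic over $j(\mathcal{B}_\delta)$, membership in $\mathcal{U}$ is insensitive to anything beyond the $\mathcal{B}_\delta$-projection, so from $\mathbf{b}_s \in \mathcal{U}$ and equal projections I get $\mathbf{b}'_s \in \mathcal{U}$ for every $s \in \Omega$. Using continuity of the sequence and the fact that $\Omega$ is club (so every finite $s$ is covered), I then descend to a multiplicative refinement indexed by $[\lambda]^{<\aleph_0}$ lying in $\mathcal{U}$, which is precisely (B).

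The main obstacle I expect is the simultaneous construction of the master condition: $\mathbf{c}$ must decide all of $j[\mathcal{B}]$ (to make $\mathcal{U}$ an ultrafilter) while remaining generic over each of the subalgebras $j(\mathcal{B}_\delta)$ (to make $\mathcal{U}$-membership depend only on $\mathcal{B}_\delta$-projections, which is what converts the purely \emph{local} refinements guaranteed by (A) into global refinements inside $\mathcal{U}$). Reconciling these two demands — global decidability against $\delta$-local genericity — is exactly where the full strength of supercompactness, as opposed to a weaker large-cardinal or c.c.\ hypothesis, should be needed, and is where I would expect to spend the bulk of the effort.
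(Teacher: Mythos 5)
The paper does not actually prove this statement: it is quoted verbatim as Theorem 5.9 of Malliaris--Shelah \cite{Optimals}, so there is no internal proof to compare your attempt against. Judged on its own terms, your sketch has a genuine gap at its central step. You want a master condition $\mathbf{c} \in j(\mathcal{B})$ such that (i) $\mathbf{c}$ decides every element of $j[\mathcal{B}]$, making $\mathcal{U} = \{\mathbf{b} : \mathbf{c} \le j(\mathbf{b})\}$ an ultrafilter, and (ii) for large $\delta$, membership of $\mathbf{b}$ in $\mathcal{U}$ depends only on the projection $\pi_\delta(\mathbf{b})$. These two demands are not merely hard to reconcile; they are jointly impossible. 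Take any nonempty $f \in P_{2^\lambda,\sigma,\sigma}$ with $\mathrm{dom}(f) \cap \delta = \emptyset$ and set $\mathbf{b} = \mathbf{x}_f$. Every nonzero element of $\mathcal{B}_\delta$ is compatible with both $\mathbf{b}$ and $-\mathbf{b}$, so $\pi_\delta(\mathbf{b}) = \pi_\delta(-\mathbf{b}) = 1$; property (ii), applied to $\mathbf{b}$, $-\mathbf{b}$, and $1 \in \mathcal{U}$, would then force both $\mathbf{b}$ and $-\mathbf{b}$ into $\mathcal{U}$. Since your derivation of (B) from (A) rests entirely on (ii) --- you pass from $\pi_\delta(\mathbf{b}'_s) = \pi_\delta(\mathbf{b}_s)$ and $\mathbf{b}_s \in \mathcal{U}$ to $\mathbf{b}'_s \in \mathcal{U}$ --- the optimality argument collapses. (The pullback itself is fine as far as it goes: with $M$ sufficiently closed, a condition deciding all of $j[\mathcal{B}]$ exists by the $<j(\sigma)$-closure of $j(P_{2^\lambda,\sigma,\sigma})$ in $M$, and your $\sigma$-completeness argument via $\mathrm{crit}(j)=\sigma$ is correct.)

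The deeper obstruction is a circularity that a one-shot embedding construction cannot see: the continuous sequences that must be handled are precisely those lying in $\mathcal{U}$, hence are not identifiable until $\mathcal{U}$ is fixed, and a single $\mathbf{c}$ will already have decided $j(\mathbf{b}'_s)$ for every candidate refinement with no reason to have decided it positively. The proof in \cite{Optimals} instead runs a transfinite induction building an increasing chain of $\sigma$-complete filters on $\mathcal{B}$, with bookkeeping enumerating the relevant continuous sequences in advance; at each stage the supercompactness of $\sigma$ is used, via a normal fine ultrafilter on a set of the form $[\,\cdot\,]^{<\sigma}$ rather than via an embedding applied to $\mathcal{B}$, to manufacture a multiplicative refinement whose addition keeps the filter proper. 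Repairing your approach would require folding those stage-by-stage commitments into the choice of $\mathbf{c}$, which essentially reproduces that induction.
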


So to prove our goal, it suffices to establish the following:

\begin{lemma}\label{Th5}
Suppose $\mathcal{U}$ is a $(\lambda, \sigma)$-optimal ultrafilter on $\mathcal{B}$. Then $\mathcal{U}$ is $(\lambda, \mathcal{B}, T_{cas})$-moral.
\end{lemma}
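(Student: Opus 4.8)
The plan is to show that $(\lambda,\sigma)$-optimality, together with the explicit description of the nonalgebraic types of $T_{cas}$ in Lemma~\ref{TCasLemma1}, is enough to produce multiplicative refinements of arbitrary possibilities. So let $(\mathbf{a}_s : s \in [\lambda]^{<\aleph_0})$ be a $(\mathcal{B}, T_{cas}, \overline{\phi})$-possibility; we must find a multiplicative refinement in $\mathcal{U}$. Since $T_{cas}$ eliminates quantifiers in the language expanded by the $S_{\ldots}$-predicates, each instance of a $\phi_i$ is a Boolean combination of the atomic formulas $R(x,y)$ and the sort predicates, so a possibility is no more than a $\mathcal{B}$-valued approximation to a single nonalgebraic type $p(x)$ over the index structure. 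By Lemma~\ref{TCasLemma1} there are exactly three kinds of such $p(x)$, according to whether $x$ lives in some $I_n$, in $I_\omega = I \setminus \bigcup_n I_n$, or in $P$. A multiplicative refinement of $(\mathbf{a}_s)$ amounts to a single $\mathcal{B}$-valued element realizing $p$, so it suffices to produce such an element in each of the three cases; the general many-variable possibility is then assembled from the one-variable cases, realizing one coordinate at a time using the forking calculus $X \forkindep_Z Y \iff \acl(X) \cap \acl(Y) \subseteq \acl(Z)$ and the $\sigma$-completeness of $\mathcal{U}$.

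For $x$ in some $I_n$, the type is the unique nonalgebraic type isolated by $I_n(x) \wedge \bigwedge_{a \in P} \lnot R(a,x)$, so the possibility already names it and a multiplicative refinement is immediate. For $x \in I_\omega$ the type is $p_A$ for some $A \subseteq P$, and by Lemma~\ref{TCasLemma1} it does not fork over $\emptyset$; here the instances $R(a,x)$ are mutually independent, so the realizing element is built as an independent product over the coordinates $a$, and multiplicativity comes for free. In both cases only the $\sigma$-completeness of $\mathcal{U}$ is used, to intersect the countably many constraints indexed by $n < \omega$ --- this is where it matters that $\sigma > \aleph_0$. Neither case requires condition (A), reflecting the fact that these parts of $T_{cas}$ are tame.

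The real content is the case $x \in P$, which is exactly where $T_{cas}$ fails to be low. Here the type is $p_B$ for some $B \subseteq I$ subject to the growing constraints $|B \cap I_n| \leq n$ for every $n$, and it is these unbounded-but-finite constraints that obstruct morality over $\mathcal{B}_{2^\lambda,\aleph_0,\aleph_0}$ (Theorem~\ref{Th1}); we use $(\lambda,\sigma)$-optimality to get around them. First, using $\sigma$-completeness, extend the given possibility to a continuous sequence $(\mathbf{a}_s : s \in [\lambda]^{<\sigma})$ in $\mathcal{U}$, as noted before the definition of optimality, and feed it to that definition in the role of $(\mathbf{b}_s)$. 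It then suffices to verify hypothesis (A): exhibit a club $\Omega \subseteq [\lambda]^{<\sigma}$ so that for all sufficiently large $\delta < 2^\lambda$ with $(\mathbf{a}_s) \subseteq \mathcal{B}_\delta$ there is a multiplicative refinement $(\mathbf{b}'_s)$ from $\mathcal{B}$ that, for each $s \in \Omega$, meets every $\mathbf{a} \in \mathcal{B}_\delta$ that $\mathbf{a}_s$ meets; optimality then delivers the refinement in $\mathcal{U}$ demanded by (B). To build $(\mathbf{b}'_s)$ for a fixed large $\delta$, I would fix a single $\mathcal{B}$-name $\dot{a}^{*}$ for an element of $P$ whose $R$-neighborhood realizes $p_B$, using generic coordinates drawn from $[\delta, 2^\lambda)$ --- these exist because $\mathcal{B}$ is the completion of $P_{2^\lambda,\sigma,\sigma}$ and hence carries $2^\lambda$ mutually independent generics beyond $\mathcal{B}_\delta$. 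Setting $\mathbf{b}'_s$ to be the Boolean value that $\dot a^{*}$ realizes $p \restriction s$, intersected with $\mathbf{a}_s$, makes $(\mathbf{b}'_s)$ automatically multiplicative, since all the $\mathbf{b}'_s$ refer to the one object $\dot a^{*}$, and density below $\mathbf{a}_s$ relative to $\mathcal{B}_\delta$ follows because $\dot a^{*}$ is generic over $\mathcal{B}_\delta$.

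The main obstacle is precisely this last construction in the $P$-case: producing a single name $\dot a^{*}$ that simultaneously honors the infinitely many sharpening constraints $|B \cap I_n| \leq n$, is generic enough over $\mathcal{B}_\delta$ to be dense below each $\mathbf{a}_s$, and extends the partial commitments recorded by the possibility. The growth of the bound with $n$ is the combinatorial signature of non-lowness and is exactly what Theorem~\ref{Th1} shows cannot be overcome over the $\aleph_0$-algebra; what rescues us is that the $<\sigma$-closure of the forcing $P_{2^\lambda,\sigma,\sigma}$ absorbs the countably many level constraints at once, while each finite $s$ is already consistent with the bounds since $(\mathbf{a}_s)$ is a possibility, and the abundance of fresh independent generics in $\mathcal{B}_{2^\lambda,\sigma,\sigma}$ supplies the witnesses. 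Thus (A) holds in the $P$-case; amalgamating the refinements of the three cases along the nonforking calculus then gives a multiplicative refinement of $(\mathbf{a}_s)$ in $\mathcal{U}$, which is exactly what it means for $\mathcal{U}$ to be $(\lambda, \mathcal{B}, T_{cas})$-moral.
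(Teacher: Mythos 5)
Your high-level strategy matches the paper's: reduce to realizing the nonalgebraic $1$-types of Lemma~\ref{TCasLemma1}, observe that only the $P$-sort types $p_B$ are problematic, extend the associated distribution to a continuous sequence, and verify hypothesis (A) of optimality by building a multiplicative refinement from fresh coordinates beyond $\mathcal{B}_\delta$. But the proposal has genuine gaps exactly where the content of the lemma lies. First, the reduction from an arbitrary $(\mathcal{B}, T_{cas}, \overline{\phi})$-possibility to ``a $\mathcal{B}$-valued approximation to a single nonalgebraic type'' is not justified; the paper gets this legitimately by separation of variables (a regular good filter $\mathcal{D}_0$ on $\mathcal{P}(\lambda)$ and an isomorphism $\mathbf{j}:\mathcal{P}(\lambda)/\mathcal{D}_0 \cong \mathcal{B}$ turn morality into $\lambda^+$-saturation of a regular ultrapower, where Lemma~\ref{TCasLemma1v2} applies). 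Your appeal to quantifier elimination plus ``assembling via the forking calculus'' is not a substitute without an argument.

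Second, and more seriously, the construction of $(\mathbf{b}'_s)$ in the $P$-case is asserted rather than carried out, and as stated it is internally inconsistent: you define $\mathbf{b}'_s$ as the Boolean value that $\dot a^*$ realizes $p\restriction s$ \emph{intersected with} $\mathbf{a}_s$ and claim multiplicativity ``comes for free''; intersecting with the non-multiplicative sequence $(\mathbf{a}_s)$ destroys multiplicativity unless one separately proves the containment $\mathbf{b}'_s \leq \mathbf{a}_s$, which is precisely the hard step. A single generic name $\dot a^*$ cannot coherently decide $R(\dot a^*, y_\alpha)$ according to the signs $i_\alpha$, because distinct parameters $y_\alpha, y_\beta$ with $i_\alpha \neq i_\beta$ may coincide with positive Boolean value, and the constraints $|B_1 \cap I_n| \leq n$ may be violated on conditions where several $y_\alpha$ collapse into one $I_m$-class. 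The paper handles this with machinery you omit entirely: the countable set $\Gamma$ and the element $\mathbf{b}_*$ fixing the sorts and distinctness of the finitary parameters, the club $\Omega$ of $s \supseteq \Gamma$, the maximal antichains of conditions ``strong for'' $w_{s,\xi}$ that decide all relevant equalities via the projections $\pi_{s,\xi}$, and finally the pigeonhole argument (using that $|B_1 \cap I_m^{\overline{M}}| = m$ exactly) showing that any nonzero $\mathbf{c} \leq \mathbf{b}'_s \cap -\mathbf{b}_s$ forces $m+1$ distinct elements of $I_m$ into the $R$-neighborhood of $x$, a contradiction. That argument is where the specific combinatorics of $T_{cas}$ enters and where the proof would fail for, say, $T_{n,k}$; without it the lemma is not proved.
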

\begin{proof}
Actually, it is easy to check that $T_{cas}$ is $(\lambda, \sigma, \sigma, \sigma)$-explicitly simple, and so we could apply Theorem 7.3 from \cite{Optimals} and be done. For the reader's convenience we give a direct proof, using the niceness of $T_{cas}$.

Choose a regular good filter $\mathcal{D}_0$ on $\mathcal{P}(\lambda)$ and an isomorphism $\mathbf{j}: \mathcal{P}(\lambda)/\mathcal{D}_0 \cong \mathcal{B}$. Write $\mathcal{U}_* = \mathbf{j}^{-1}(\mathcal{U})$. We want to show that $\mathcal{U}_*$ $\lambda^+$-saturates $T_{cas}$.

Let $M \models T_{cas}$, and let $\overline{M} = M^\lambda/\mathcal{D}$; we want to show that $\overline{M}$ is $\lambda^+$-saturated. Since $\mathcal{U}_*$ is $\lambda$-regular, we know that $|I_\alpha^{\overline{M}}| \geq \lambda^+$ for each $\alpha \leq \omega$. So it suffices to realize types $p(x)$ as in items two or three from Lemma~\ref{TCasLemma1v2}. We just consider case two; case three is just easier. So choose $B_0, B_1 \subseteq I^{\overline{M}}$ disjoint with each $|B_i| \leq \lambda$ and each $|B_1 \cap I_n^{\overline{M}}| \leq n$. We show there is $f \in M^I$ such that $[[f/\mathcal{U}_*]] \in P^{\overline{M}}$ and $\overline{M} \models R([[f/\mathcal{U}_*]], b)$ for each $b \in B_1$, and $\overline{M} \models \lnot R([[f/\mathcal{U}_*]], b)$ for each $b \in B_0$. Note that by extending $B_1$, we can suppose each $|B_1 \cap I_n^{\overline{M}}| = n$. So actually we can also suppose that $B_0 \subseteq I_\omega^{\overline{M}}$, as the other elements are redundant.

Enumerate $B_0 \cup B_1 = \overline{b} = (b_\alpha: \alpha < \lambda)$. For each $\alpha < \lambda$ choose $g_\alpha \in (I^M)^\lambda$ with $[[g_\alpha/\mathcal{U}_*]] = b_\alpha$. For each $\alpha < \lambda$, let $i_\alpha$ be such that $b_\alpha \in B_{i_\alpha}$, and let $\gamma_\alpha \leq \omega$ be such that $b_\alpha \in I_{\gamma_\alpha}^{\overline{M}}$. Also, let $\Gamma$ be the set of all $\alpha < \lambda$ such that $b_\alpha \in B_1 \cap \bigcup_{n < \omega} I_n^{\overline{M}}$, i.e. such that $\gamma_\alpha < \omega$; so $\Gamma$ is countable. 

Now, for each formula $\phi(x_0, \ldots, x_{n-1})$ and for each $\alpha_0 < \ldots < \alpha_{n-1}$ let 
$$||\phi(y_{\alpha_0}, \ldots, y_{\alpha_{n-1}})|| := \mathbf{j}(\{i \in I: M \models \phi(g_{\alpha_0}(i), \ldots, g_{\alpha_{n-1}}(i))\}).$$

 For instance, note that each $||I(y_\alpha)|| =1$. For each $s \in [\lambda]^{<\aleph_0}$ let $\mathbf{a}_s = ||\exists x \bigwedge_{\alpha \in s} R(x, y_{\alpha})^{i_\alpha}||$. Then each $\mathbf{a}_s \in \mathcal{U}$, and it suffices to show that $(\mathbf{a}_s: s \in [\lambda]^{<\aleph_0})$ has a multiplicative refinement in $\mathcal{U}$.  Also, let 

$$\mathbf{b}_* = \bigcap_{\alpha \not= \beta \in \Gamma} ||y_\alpha \not= y_\beta|| \, \cap \, \bigcap_{\alpha \in \Gamma} ||I_{\gamma_\alpha}(y_\alpha)||.$$

 So $\mathbf{b}_* \in \mathcal{U}$ since $\mathcal{U}$ is $\sigma$-complete. For each $s \in [\lambda]^{<\aleph_0}$ let $\mathbf{b}_s = \mathbf{b}_* \cap \mathbf{a}_s$. We will show that $(\mathbf{b}_s: s \in [\lambda]^{<\aleph_0})$ has a multiplicative refinement in $\mathcal{U}$.

For each $s \in [\lambda]^{<\sigma}$ let $\mathbf{b}_s := \bigcap_{t \in [s]^{<\aleph_0}} \mathbf{b}_t$; then each $\mathbf{b}_s \in \mathcal{U}$ and $(\mathbf{b}_s: s \in [\lambda]^{<\sigma})$ is continuous. Let $\Omega$ be the club set of all $s \in [\lambda]^{<\sigma}$ with $\Gamma \subseteq s$. It suffices to show that condition (A) holds in the definition of optimality with respect to $\Omega$. So let $\delta$ be given; we can suppose by increasing $\delta$ that whenever $\phi$ is a formula with parameters from $\overline{y} = (y_\alpha: \alpha < \lambda)$, then $||\phi|| \in \mathcal{B}_\delta$.

    Given $s \in [\lambda]^{<\sigma}$ and given $\mathbf{a} \in \mathcal{B}_\delta$ nonzero, say that $\mathbf{a}$ is \emph{strong for} $s$ if for all $\alpha \in s$, if $\beta \leq \alpha$ is least such that $\mathbf{a} \cap  ||y_\alpha = y_{\beta}|| \not= 0$ then $\beta \in s$ and $\mathbf{a} \leq ||y_\alpha = y_\beta||$. 
    
Note that for all $s \in [\lambda]^{<\sigma}$ and for all nonzero $\mathbf{a} \in \mathcal{B}_\delta$, there is $t \supseteq s$ in $[\lambda]^{<\sigma}$ and $\mathbf{b} \leq \mathbf{a}$ such that $\mathbf{b}$ is a nonzero element of $\mathcal{B}_\delta$ and $\mathbf{b}$ is strong for $t$. This is because $\mathcal{B}_\delta$ has a $\sigma$-closed dense subset, and so we can construct $\mathbf{b}$ iteratively. (Recall that $\mathcal{B}_\delta = \mathcal{B}_{\delta, \sigma, \sigma}$ is the Boolean algebra completion of the partial order of functions $P_{\delta, \sigma, \sigma}$; also given $f \in P_{\delta, \sigma, \sigma}$, $\mathbf{x}_f$ is the element of $\mathcal{B}_\delta$ corresponding to $f$. So for our $\sigma$-closed subset of $\mathcal{B}_\delta$ we can take $\{\mathbf{x}_f: f \in P_{\delta, \sigma, \sigma}\}$.)
 
 Suppose $\mathbf{a}$ is strong for $s$. Define $\pi_{\mathbf{a},s}:s \to s$ by $\pi_{\mathbf{a}}(\alpha) = $ the least $\beta \leq \alpha$ with $||y_\alpha = y_\beta|| \cap \mathbf{a}$ nonzero. This $\beta$ is an element of $s$ by definition of strongness, and further we always have $\mathbf{a} \leq  ||y_\alpha = y_\beta||$. Note also that if $\mathbf{a}$ is strong for $s$ and $\mathbf{b}$ is strong for $t$ and $\mathbf{a} \cap \mathbf{b} \not= 0$, then $\pi_{\mathbf{a},s}$ and $ \pi_{\mathbf{b},t}$ agree on $s \cap t$. Further, if $\mathbf{a}$ is strong for $s$, then for each $\alpha, \alpha' \in s$, $\mathbf{c} \leq ||y_\alpha = y_{\alpha'}||$ iff $\mathbf{c} \cap ||y_\alpha = y_{\alpha'}|| \not= 0$ iff $\pi_{\mathbf{c}}(\alpha) = \pi_{\mathbf{c}}(\alpha')$.
 
    For each $s  \in \Omega$ let $\{\mathbf{a}_{s, \xi}: \xi < \xi(s)\}$ and $\{w_{s, \xi} : \xi < \xi(s)\}$ satisfy:
    
    \begin{itemize}
    \item $\{\mathbf{a}_{s, \xi}: \xi < \xi(s)\}$ is a maximal antichain of $\mathcal{B}_\delta$ (and hence of $\mathcal{B}$) below $\mathbf{b}_s$; 
    \item Each $\mathbf{a}_{s, \xi}$ is strong for $w_{s, \xi}$;
    \item Set $\pi_{s, \xi} = \pi_{\mathbf{a}_{s, \xi}, w_{s, \xi}}$. Then $w_{s, \xi} = s \cup \{\pi_{s, \xi}(\alpha): \alpha \in s\}$.
    \end{itemize}
    
  	For each $s, \xi$ define $g_{s, \xi}: w_{s, \xi} \to 2$ by: $g_{s, \xi}(\alpha) =i$ iff there is $\beta \in s$ with $\pi_{s, \xi}(\beta) = \pi_{s, \xi}(\alpha)$ and $i_\beta = i$. This is well-defined since $\mathbf{a}_{s,\xi} \leq \mathbf{b}_s$.

    For each $s, \xi$ define $h_{s, \xi}$ by: $h_{s, \xi} = \{\langle \delta + \alpha, i\rangle : \langle \alpha, i \rangle \in g_{s, \xi}\}$. Then, for each $\alpha < \lambda$ let $\mathbf{b}'_{\{\alpha\}} = \mathbf{b}_* \cap \bigcup \{ \mathbf{a}_{s, \xi} \cap \mathbf{x}_{h_{s, \xi}}: \alpha \in s \in \Omega, \xi < \xi(s)\},$ and for each $s \in [\lambda]^{<\sigma}$ let $\mathbf{b}'_s = \bigcap_{\alpha \in s} \mathbf{b}'_{\{\alpha\}}$. Then it suffices to show $(\mathbf{b}'_s)$ is as in (A) from the definition of optimal ultrafilters. Multiplicativity is clear. Also, suppose $s \in \Omega$ and $\mathbf{a} \in \mathcal{B}_\delta$ is such that $\mathbf{a} \cap \mathbf{b}_s$ is nonzero. Then there is some $\xi < \xi(s)$ such that $\mathbf{a} \cap \mathbf{a}_{s, \xi}$ is nonzero. Then $\mathbf{a} \cap \mathbf{a}_{s, \xi} \cap \mathbf{x}_{h_{s, \xi}}$ is nonzero, but $\mathbf{a}_{s, \xi} \cap \mathbf{x}_{h_{s, \xi}} \leq \mathbf{b}'_s$. 

So it remains to show that for $s \in [\lambda]^{<\sigma}$, $\mathbf{b}'_s \leq \mathbf{b}_s$ . It suffices to show this for finite $s$, since both $(\mathbf{b}'_s)$ and $(\mathbf{b}_s)$ are continuous. Choose $s \in [\lambda]^{<\aleph_0}$ and suppose towards a contradiction that $\mathbf{b}'_s \not \leq \mathbf{b}_s$. Write $\mathbf{c}_0 = \mathbf{b}'_s \cap  -\mathbf{b}_s$ and write $s = \{\alpha_0, \ldots, \alpha_{n-1}\}$. We can inductively choose $\mathbf{c}_0 \geq \ldots \geq \mathbf{c}_n > 0$ such that for each $i < n$, writing $\alpha = \alpha_i$, there is $s_\alpha$ and $\xi_\alpha$ such that $\alpha \in s_\alpha$ and $\mathbf{c}_{i+1} \leq \mathbf{a}_{s_\alpha,\xi_\alpha} \cap \mathbf{x}_{h_{s_\alpha,\xi_\alpha}}$. Let $w = \bigcup_\alpha w_{s_\alpha, \xi_\alpha}$. Also choose $\mathbf{c} < \mathbf{c}_n$ nonzero such that for each $\beta \in w$ and for each $m < \omega$, $\mathbf{c}$ decides $||I_m(y_\beta)||$. This is possible because $\mathcal{B}_\delta$ has a $\sigma$-closed dense subset (and $\sigma > \aleph_0$).
    
    Since the $h_{s_\alpha, \xi_\alpha}$'s are compatible, so must the $g_{s_\alpha, \xi_\alpha}$'s be. Put $g := \bigcup_{\alpha \in s} g_{s_\alpha, \xi_\alpha}$; clearly $g: w \to 2$. Similarly, since $\mathbf{c}$ is strong for each $w_{s_\alpha, \xi_\alpha}$, we have that $\mathbf{c}$ is strong for $w$; let $\pi:= \bigcup_{\alpha \in s} \pi_{s_\alpha, \xi_\alpha}$. 
    
I claim that for each $\beta \in w$ and each $i < 2$, we have that $g(\beta) = i$ iff there is some $\gamma \in \bigcup_{\alpha \in s} s_\alpha$ with $\pi(\beta) = \pi(\gamma)$ and $i_{\gamma} = i$. Left to right is clear by the third condition on the $(\mathbf{a}_{s, \xi}, w_{s, \xi})$'s; for the other direction suppose we had $\gamma \in s_\alpha$ and $\gamma' \in s_{\alpha'}$ with $\pi(\gamma) = \pi(\gamma')$; we want to show that $i_{\gamma} = i_{\gamma'}$. But on the one hand we have $\mathbf{c} \leq ||y_\gamma = y_{\gamma'}||$, and on the other hand, since $\mathbf{c} \leq  \mathbf{b}_{s_{\alpha}} \cap \mathbf{b}_{s_{\alpha'}}$, we have that $\mathbf{c} \leq ||R(x, y_\gamma)^{i_\gamma} \land R(x, y_{\gamma'})^{i_{\gamma'}}||$, from which it follows that $i_{\gamma} = i_{\gamma'}$.
  
  Now, recall that $\mathbf{b}_s = \mathbf{b}_* \cap \mathbf{a}_s$, where $\mathbf{a}_s := ||\exists x \bigwedge_{\alpha \in s} R(x, y_\alpha)^{i_\alpha}||$. Note that $\mathbf{a}_s = ||\phi(y_\alpha: \alpha \in s)||$, where $\phi(y_\alpha: \alpha \in s)$ states that  $y_\alpha \not= y_{\alpha'}$ for each $\alpha, \alpha' \in s$ with $i_\alpha \not= i_{\alpha'}$, and that for each $m < n$ (recall $|s| =n$), there is no $t \in [s]^{m+1}$ such that for each $\alpha \in t$, $I_m(y_\alpha)$ holds and $i_\alpha = 1$, and for each $\alpha, \alpha' \in t$, $y_\alpha \not= y_{\alpha'}$.
  
  By the preceding paragraph, we have that whenever $i_{\alpha} \not= i_{\alpha'}$ we have that $\pi(\alpha) \not= \pi(\alpha')$, hence $\mathbf{c} \leq ||y_\alpha \not= y_{\alpha'}||$. Thus, since $\mathbf{c} \cap \mathbf{b}_s = 0$, there must be some $m < n$ and some $t \in [s]^{m+1}$  such that for each $\alpha \in t$, $I_m(y_\alpha)$ holds and $i_\alpha = 1$, and for each $\alpha, \alpha' \in t$, $y_\alpha \not= y_{\alpha'}$. Let $t'$ be the set of all $\beta < \lambda$ such that $\gamma_\beta = m$; so $t' \in [\Gamma]^m$, in particular $t' \subseteq s_\alpha$ for each $\alpha \in s$ (since each $s_\alpha \in \Omega$). By the pigeonhole principle, choose $\alpha_* \in t$ such that $\pi(\alpha_*) \not= \pi(\beta)$ for each $\beta \in t'$. But then $\mathbf{c} \leq ||R(x, y_\alpha)||$ for each $\alpha \in t' \cup \{\alpha_*\}$, and $\mathbf{c} \leq ||I_m(y_\alpha)||$ for each $\alpha \in t' \cup \{\alpha_*\}$, contradicting that $\mathbf{c}$ is nonzero.
\end{proof}

\begin{theorem}
Suppose there is a supercompact cardinal. Then Keisler's order is not linear.
\end{theorem}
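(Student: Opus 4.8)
The plan is to exhibit two theories that are incomparable in Keisler's order; I take $T_{cas}$ and $T_{n,k}$, and show $T_{cas} \not\trianglelefteq T_{n,k}$ in ZFC and $T_{n,k} \not\trianglelefteq T_{cas}$ from the supercompact. In each direction the engine is the key transfer fact (Theorem 6.13 of \cite{DividingLine}): to obtain $T_0 \not\trianglelefteq T_1$ it suffices to produce a single algebra $\mathcal{B} = \mathcal{B}_{2^\lambda, \mu, \theta}$ (with $\lambda \geq \mu \geq \theta$) carrying an ultrafilter that is $(\lambda, \mathcal{B}, T_1)$-moral but not $(\lambda, \mathcal{B}, T_0)$-moral. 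So the whole proof reduces to locating, in each direction, one algebra on which the ``harder'' theory is moralizable while the ``easier'' one provably is not.

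For $T_{cas} \not\trianglelefteq T_{n,k}$: set $\lambda = \aleph_{k-2}$ (which exceeds $\aleph_0$ since $k \geq 3$) and $\mathcal{B} = \mathcal{B}_{2^\lambda, \aleph_0, \aleph_0}$. Theorem~\ref{Th2} supplies an ultrafilter $\mathcal{U}$ on this $\mathcal{B}$ that is $(\lambda, \mathcal{B}, T_{n,k})$-moral. On the other hand $T_{cas}$ is simple but non-low (the formula $R(x,y)$ witnesses non-lowness), so Theorem~\ref{Th1} tells us that no ultrafilter on $\mathcal{B}_{2^\lambda, \aleph_0, \aleph_0}$ is $(\lambda, \mathcal{B}, T_{cas})$-moral; in particular $\mathcal{U}$ is not. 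Applying Theorem 6.13 with $T_0 = T_{cas}$, $T_1 = T_{n,k}$, $\mu = \theta = \aleph_0$ yields $T_{cas} \not\trianglelefteq T_{n,k}$. This direction needs no large cardinal.

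For $T_{n,k} \not\trianglelefteq T_{cas}$: let $\sigma$ be supercompact. Since $\sigma$ is then inaccessible, it is a regular strong limit and so $\sigma = \sigma^{<\sigma}$. Put $\lambda = \sigma^{+(n-1)}$ and $\mathcal{B} = \mathcal{B}_{2^\lambda, \sigma, \sigma}$; note $\lambda \geq \sigma$, so the standing hypotheses of Theorem~\ref{Th4} and Lemma~\ref{Th5} are met. Theorem~\ref{Th4} gives a $(\lambda, \sigma)$-optimal ultrafilter $\mathcal{U}$ on $\mathcal{B}$, and Lemma~\ref{Th5} upgrades optimality to $(\lambda, \mathcal{B}, T_{cas})$-morality. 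Meanwhile, because $\sigma = \sigma^{<\sigma}$ and $\lambda = \sigma^{+(n-1)}$, Theorem~\ref{Th3} guarantees that no ultrafilter on this same $\mathcal{B} = \mathcal{B}_{2^\lambda, \sigma, \sigma}$ is $(\lambda, \mathcal{B}, T_{n,k})$-moral. Thus $\mathcal{U}$ is $(\lambda, \mathcal{B}, T_{cas})$-moral but not $(\lambda, \mathcal{B}, T_{n,k})$-moral, and Theorem 6.13 (now with $T_0 = T_{n,k}$, $T_1 = T_{cas}$, $\mu = \theta = \sigma$) delivers $T_{n,k} \not\trianglelefteq T_{cas}$.

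Combining the two directions, $T_{cas}$ and $T_{n,k}$ are $\trianglelefteq$-incomparable, so Keisler's order is not linear. The step requiring the most care, and where the supercompact enters essentially, is the alignment of parameters in the second direction: the single Boolean algebra $\mathcal{B}_{2^\lambda, \sigma, \sigma}$ must simultaneously (i) admit a $(\lambda,\sigma)$-optimal ultrafilter, which is exactly what Theorem~\ref{Th4} extracts from supercompactness, and (ii) fall under the non-morality of Theorem~\ref{Th3}, which is why $\lambda$ is taken to be $\sigma^{+(n-1)}$ and why one must first record that $\sigma = \sigma^{<\sigma}$. Everything else is assembly of the cited results.
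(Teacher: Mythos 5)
Your proposal is correct and is exactly the paper's argument: the paper's proof of this theorem is the one-line assembly "Theorems~\ref{Th1} and \ref{Th2} give $T_{cas} \not\trianglelefteq T_{n,k}$; Theorems~\ref{Th3}, \ref{Th4} and Lemma~\ref{Th5} give $T_{n,k} \not\trianglelefteq T_{cas}$," with the same parameter choices ($\lambda = \aleph_{k-2}$ with $\mu = \theta = \aleph_0$ in one direction, $\lambda = \sigma^{+(n-1)}$ with $\mu = \theta = \sigma$ in the other) feeding into Theorem 6.13 of \cite{DividingLine}. You have merely made explicit the checks (e.g.\ $\aleph_{k-2} > \aleph_0$, $\sigma = \sigma^{<\sigma}$, $\lambda \geq \sigma$) that the paper leaves implicit.
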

\begin{proof}
By Theorems~\ref{Th1} and \ref{Th2} we know that $T_{cas} \not \trianglelefteq T_{n, k}$, and by Theorems~\ref{Th3} and \ref{Th4} and Lemma~\ref{Th5} we conclude that $T_{n, k} \not \trianglelefteq T_{cas}$.
\end{proof}
  
\end{document}